\newtheorem{theorem}{Theorem}[section]
\newtheorem{remark}{Remark}[section]
\newcommand{\RR}{\mathbb{R}}
\newcommand{\NN}{\mathbb{N}}
\newcommand{\abs}[1]{\lvert#1\rvert}
\begin{document}
\title{Asymptotic behavior of solutions to a $k$-Hessian evolution equation}

\author{Justino S\'anchez}
\date{}
\maketitle
\begin{center}
Departamento de Matem\'{a}ticas, Universidad de La Serena\\
 Avenida Cisternas 1200, La Serena, Chile.
\\email: jsanchez@userena.cl
\end{center}

\maketitle
\begin{abstract}
We study the long-time behavior of solutions of the $k$-Hessian evolution equation $u_t=S_{k}(D^2 u)$, posed on a bounded domain of the $n$-dimensional space with homogeneous boundary conditions. To this end, we construct a separable solution and we show that the long-time behavior of $u$ is precisely described by this special solution. Further, we initiate the study of that dynamic phenomenon on the entire space, providing a new class of explicit and radially symmetric self-similar solutions that we call $k$-{\it Barenblatt solutions}. These solutions present some common properties as those of well-known Barenblatt solutions for the porous media equation and the $p$-Laplacian equation. It is known that self-similar solutions are important in describing the intermediate asymptotic behavior of general solutions.
\end{abstract}

\section{Introduction} 
Let $n\in\NN, \,n\geq 2$, and let $k$ be an integer $1\leq k\leq n$. Let $\Omega\subset\RR^n$ be a bounded, strictly $(k-1)$-convex domain with boundary $\partial\Omega\in C^2$, and $u(t,x),\, t\in [0,\infty),\, x\in\overline{\Omega}$, a solution of the initial boundary value problem
\begin{equation}\label{parak-Hessian}
\begin{cases}
u_t=S_{k}(D^2 u)\;\; \mbox{in}\;\; (0,\infty)\times\Omega,\\
u(t,x)\, \mbox{ is }\, k\mbox{-admissible for each}\; t\geq 0,\\
u(t,x)=0\;\; \mbox{on}\;\; [0,\infty)\times\partial\Omega,
\end{cases}
\end{equation}
where $D^2u=\left(\frac{\partial^2u}{\partial x_i \partial x_j}\right)$ is the Hessian matrix of $u$ and $S_{k}(D^2 u)$ denotes the $k$-Hessian operator of $u$. 

Our central interest in this paper is to study the long-time asymptotic behavior of classical solutions $u(t,x)$ of Problem \eqref{parak-Hessian} with $k>1$, and to derive estimates involving $u(t,x)$ and a special solution of separate variables. In particular, sharp estimates for the rate at which $u(t,x)\rightarrow 0$ as $t\rightarrow\infty$ in terms of this solution are given. To this end, we construct a solution of the form $u(t,x)=T(t)\theta (x)$, where $T(t)\in C^\infty[0,\infty)$ and the profile $\theta(x)$ belongs to a suitable class of functions that ensure ellipticity of $S_{k}(D^2 u)$. Using Hessian Sobolev inequalities and the comparison principle for elliptic equations, we show that the $\sup_{\Omega}|\theta(x)|$ admits an estimate depending only on $n,k$ and $\Omega$. An interesting geometric property arises when $\Omega$ is a ball, where $(1+t)^{1/(k-1)}u(t,x)$ asymptotically approaches a rotationally symmetric state.

In order to stating our assumptions and results more precisely, we will briefly discuss the main features of $k$-Hessian operators beginning with its definition. For a twice-differentiable function $u$ defined on a domain $\Omega\subset\RR^n$, the {\it $k$-Hessian operator} $(k=1,...,n)$ is defined by the formula 
\[
S_k(D^2u)=\sigma_k(\Lambda)=\sum_{1\leq i_1<...<i_k\leq n}\lambda_{i_1}...\lambda_{i_k},
\] 
where $\Lambda=\Lambda(D^2u):=(\lambda_1,...,\lambda_n)$, the $\lambda$'s are the eigenvalues of $D^2u$ and $\sigma_k$ is the $k$-th elementary symmetric function. Equivalently, $S_k(D^2u)$ is the sum of the $k$-th principal minors of the Hessian matrix. See, {\it e.g.}, \cite{Wang94, Wang09}. These operators form an important class of nonlinear second order operators that contains, as the most relevant examples, the Laplace operator $S_1(D^2u)=\Delta u$  and the Monge-Amp\`{e}re operator $S_n(D^2u)=$ det $D^2u$. They are fully nonlinear when $k>1$. The study of $k$-Hessian equations has many applications in geometry, optimization theory and in other related fields. See \cite{Wang09}. There exists a large literature about existence, regularity and qualitative properties of solutions for the $k$-Hessian equations, starting with the seminal work \cite{CaNS85}. See {\it e.g.} \cite{ChWa01, Jacobsen99, Jacobsen04, JaSc02, Tso89, Tso90} and the references therein.

We point out that the $k$-Hessian operators are of divergence form, $k$-homogeneous and also invariant under rotations. Further, they are not elliptic in general. Thus, in order to ensure ellipticity of $S_k(D^2u)$, one should look for solutions $u$ in the class 
\begin{equation*}%\label{Phi:k:0}
\Phi^k(\Omega)=\{u\in C^2(\Omega)\cap C(\overline{\Omega}): \sigma_{j}(\Lambda(D^2u))>0\;\;\mbox{in}\;\;\Omega,\, j=1,...,k\}.
\end{equation*}
The functions in $\Phi^k(\Omega)$ are called $k$-{\it admissible} (or {\it uniformly $k$-convex}) on $\Omega$. Note that $n$-admissible functions are convex in the usual sense. The space of $k$-admissible functions is strictly larger than the space of convex functions for $1\leq k\leq n-1$ (see \cite{CaNS85} for the proof). Denote by $\Phi_0^k(\Omega)$ the set of functions in $\Phi^k(\Omega)$ that vanish on the boundary $\partial\Omega$.
Observe that the functions in $\Phi_0^k(\Omega)$ are subharmonic and, by the maximum principle, negative on $\Omega$. See, {\it e.g.}, \cite{Wang94}. 
The $k$-Hessian operator defined on $\Phi_0^k(\Omega)$ imposes certain restrictions on the geometry of $\Omega$. More precisely, those domains $\Omega$ whose boundary $\partial\Omega$ satisfies the inequality $\sigma_j(\kappa_1, ..., \kappa_{n-1})>0$ for each $j=1,...,k$, where $\kappa_1, ..., \kappa_{n-1}$ denote the principal curvatures of $\partial\Omega$ relative to the interior normal, are called {\it strictly $k$-convex} (or {\it uniformly $k$-convex}). %These conditions on the boundary $\partial\Omega$ are {\it necessary} in order to solve the boundary value problem with Dirichlet data. See \cite{CaNS85}. 
A typical example of a domain $\Omega$ for which the above inequality holds is a ball. Notice that uniformly $(n-1)$-convex domains are strictly convex in the classical sense. For more details we refer the interested reader to \cite{Wang09}.

An interesting case where \eqref{parak-Hessian} arises with $k=n,\, \Omega$ a bounded strictly convex domain in $\RR^n,\, n\geq 2$, with smooth boundary $\partial\Omega$ and $u(t,x)$ is strictly convex (downward) for each $t\geq 0$, is in the study of evolution problems for nonparametric surfaces with speed depending on curvature \cite{Oliker91}. By first searching for solutions of separate variables, $u(t,x)=\varphi(t)\psi(x)$, the author in \cite{Oliker91} shows that the function $\psi$ should satisfy the Monge-Amp\`{e}re equation
\begin{equation}\label{psi}
\mbox{det}(D^2\psi)=-\frac{1}{n-1}\,\psi\;\; \mbox{in }\, \Omega,\, \psi=0\;\; \mbox{on }\; \partial\Omega, 
\end{equation}
where $\psi$ is strictly convex and negative on $\Omega$. The solution found from separation of variables is then used to describe the asymptotic behavior of solutions of a more general problem involving the Gauss curvature, which exhibits several intriguing geometric properties. In the present paper we show that the ideas in \cite{Oliker91} can be used to study Problem \eqref{parak-Hessian} with $k>1$. In fact, we extend some results of \cite{Oliker91} to the whole class of fully nonlinear $k$-Hessian operators, {\it i.e.}, the cases for $2\leq k\leq n$. In particular, we have the estimate
\begin{equation*}
\sup_{\Omega}\left|(1+t)^{1/(k-1)}u(t,x)-\theta(x)\right|\leq C(1+t)^{-1},
\end{equation*}
which holds for solutions of Problem \eqref{parak-Hessian}. Here the function $\theta(x)$ is the unique negative solution of the stationary problem
\begin{equation}\label{teta}
S_{k}(D^2\theta)=-\frac{1}{k-1}\,\theta\;\, \mbox{in}\;\, \Omega,\, \theta=0\,\, \mbox{ on}\;\,\partial\Omega.
\end{equation}

We point out that the proof of the existence of $\psi$ in \eqref{psi} was the most lengthy portion of \cite{Oliker91}, on the contrary, here, the proof of the existence and uniqueness of $\theta$ in \eqref{teta} can be deduce directly from Theorem 5.9 of \cite{Jacobsen99}. Concerning stationary problems, as far as we known, the first approach to study the class of $k$-Hessian operators using global bifurcation was done in \cite{Jacobsen99}. Recently \cite{Dai17}, an eigenvalue problem for $k$-Hessian equations was studied. Results on existence, non-existence, uniqueness and multiplicity of radially symmetric admissible solutions were also determined by the bifurcation method. 

Concerning stationary problems, as far as we know, the first approach for studying the class of $k$-Hessian operators using global bifurcation was done in \cite{Jacobsen99}. Recently, an eigenvalue problem for $k$-Hessian equations was discussed in \cite{Dai17}. Results on the existence, non-existence, uniqueness and multiplicity of radially symmetric admissible solutions were also determined by the bifurcation method in \cite{Dai17}. 

We should also mention \cite{IvFI18}, where some distinctions between the classical and the contemporary theory of second-order fully nonlinear parabolic and elliptic partial differential equations are established. New algebraic and geometric structures are investigated based on notions from the algebra of symmetric matrices and differential geometry. Among others, new results on the asymptotic behavior of the $m$-Hessian evolution operators acting on bounded domains are established. Such operators are defined by $E_m[u]:=-u_tT_{m-1}[u]+T_m[u],\, m=1,...,n$, where $T_m[u]$ is the $m$th-order trace of the Hessian matrix of $u$. See \cite{IvFI18} for the precise statements on these results. 

The last part of this paper is devoted to the study of self-similar solutions of a $k$-Hessian evolution equation posed in $\RR^n$. This study is a first step towards understanding important properties of the underlying equations which can be captured by these special solutions. We point out that there is a vast literature concerning evolution equations that generalize the standard heat equation. This literature addresses among others, the $p$-Laplacian equation, the porous medium equation and the space-fractional porous medium equation. See {\it e.g.} \cite{Bidaut-Veron09, FiWi08, FiWi16, GaVa04, Huang14, ISVa08, KaVa88, MaMe09, QuSo07, Wang93}. However, relatively little is known in the case of fully nonlinear parabolic equations on the entire space, which includes different parabolic analogues of the $k$-Hessian equation. A work in that direction was carried out in \cite{ArTr10}. There, the authors studied the long-time asymptotics of solutions of the uniformly parabolic equation
\begin{equation}\label{nonexplicit}
u_t+F(D^2u)=0\;\; \mbox{in }\; \RR^n\times \RR_+,
\end{equation}
where $F$ is a positively homogeneous operator, subject to the initial condition $u(x,0)=g(x)$, and where the function $g$ does not change sign and has a proper decay at infinity. Although the operator $F$ is not assumed to be rotationally invariant, as the $k$-Hessian is, it is assumed to be positively homogeneous of order one, {\it i.e.}, $F(\eta M)=\eta F(M)$ for all $n\times n$ real symmetric matrices $M$ and $\eta\geq 0$, a hypothesis that is satisfied by $[S_k(D^2u)]^{1/k}$ but not by $S_k(D^2u)$ if $k>1$. The basic existence, uniqueness and other properties of the self-similar profiles of \eqref{nonexplicit} were established in \cite{ArTr10}, without any explicit expressions for them. 

Concerning exact solutions of some nonlinear diffusion equations, in \cite{King90} new closed-form similarity solutions of $N$-dimensional radially symmetric equations were given, which are generalizations of the classical Barenblatt solutions. In \cite{ISVa08}, the authors study an explicit equivalence between radially symmetric solutions for two basic nonlinear degenerate diffusion equations, namely, the porous medium equation and the $p$-Laplacian equation. In particular, they derive the existence of new self-similar solutions for the evolution $p$-Laplacian equation. In \cite{Huang14} several one-parameter families of explicit self-similar solutions were constructed for the porous medium equations with fractional operators.\\

\noindent Notice that the equation in \eqref{parak-Hessian} for negative solutions is equivalent to equation
\begin{equation}\label{k-parabol}
u_t=(-1)^{k-1} S_k(D^2 u)
\end{equation}
for positive solutions, by the $k$-homogeneity of the $k$-Hessian operator. When $k>1$ in \eqref{k-parabol}, we have found an explicit family of positive self-similar solutions on $\RR^n$ with compact support in space for every fixed time. As these solutions have apparently not been mentioned in the literature, we describe them now: 
\begin{equation}\label{k-Baren}
U_C(t,x)=t^{-\alpha}\left(C-\gamma\left(\frac{\abs{x}}{t^\beta}\right)^2\right)_{+}^\frac{k}{k-1},
\end{equation}
where $(\cdot)_+$ denotes the positive part, $C>0$ is an arbitrary constant, and $\alpha, \beta$ and $\gamma$ have precise values, namely
\[
\alpha=\frac{n}{n(k-1)+2k},\;\;\;\; \beta=\frac{1}{n(k-1)+2k},\;\;\;\; \gamma=\frac{k-1}{2k}\left(\frac{\beta}{c_{n,k}}\right)^\frac{1}{k},\;\;\;\; c_{n,k}=\frac{1}{n}\binom{n}{k}.
\] 
Note that this family, whose elements we call {\it $k$-Barenblatt solutions}, is well defined for the full range of $k$-Hessian operators with $k>1$. Moreover, these solutions are similar to those known for the porous medium equation and the $p$-Laplacian equation as well. See, {\it e.g.}, \cite{ISVa08} and the references therein. We also note that the relation between the similarity exponents $\alpha$ and $\beta$,\, $\alpha=n\beta$, is an {\it a priori} condition that reflects the mass conservation of these special solutions.\\

The paper is organized as follows. Section \ref{Mainresults} is devoted to stating our main results, namely Theorem \ref{separablesolution}, Theorem \ref{asymptotic} and Theorem \ref{radialasymptotic}, which are proven in Section \ref{proofs}. Finally, in Section \ref{k-Barenblatt} we derive the family of self-similar solutions given in \eqref{k-Baren} and we present some of its properties.

\section{Main results}\label{Mainresults}
Firstly, we claim that any solution of Problem \eqref{parak-Hessian} is global, that is, is defined at all times. Indeed, it follows from \eqref{parak-Hessian} that, for each $x\in\Omega$, $u(t,x)$ is a nondecreasing function of $t$. Then, $u(t,x)\geq u(t_1,x)$ for $t\geq t_1$. In particular, $0\geq u(t,x)\geq u(0,x)$. Consequently, from the dichotomy between global existence and finite time blow-up we conclude that $u(t,x)$ is global.

Secondly, we search for separable solutions of Problem \eqref{parak-Hessian} of the form 
\begin{equation}\label{separable}
u(t,x)=T(t)\theta (x),
\end{equation}
where $T(t)\in C^\infty[0,\infty)$ and $\theta(x)\in C^2(\Omega)\cap C(\overline{\Omega})$. Since, $u(0,x)=T(0)\theta (x)$ vanishes on $\partial\Omega$ and must be $k$-admissible, either $-\theta$ or $\theta$ is $k$-admissible, depending on the sign of $T(0)$. More precisely, either $T(0)<0$ and $-\theta$ is $k$-admissible or $T(0)>0$ and $\theta<0$ and $k$-admissible. Using the $k$-homogeneity of the $k$-Hessian operator, both cases are equivalent and we always assume below that we are dealing with the latter case.\\

Inserting \eqref{separable} into the equation in \eqref{parak-Hessian}, and taking into account that $S_{k}(D^2\theta)>0$ on $\Omega$, we obtain
\begin{equation*}\label{sepaequ}
\frac{T'}{T^k}= \frac{S_{k}(D^2\theta)}{\theta}=\lambda=\mbox{const}\leq 0. 
\end{equation*}
Hence,
\begin{equation}\label{Te}
T(t)=\left[T^{1-k}(0)-(k-1)\lambda t\right]^\frac{1}{1-k},\; t\geq 0,
\end{equation}
\begin{equation}\label{theta}
S_{k}(D^2\theta)=\lambda\theta\;\; \mbox{in}\;\; \Omega\;\; \mbox{and}\;\; \theta=0\;\; \mbox{on}\;\; \partial\Omega.
\end{equation}
The case $\lambda=0$ is uninteresting since it leads to the trivial solution. In fact, in this case $T(t)=\mbox{const}$ and it follows from \eqref{parak-Hessian} that either $S_{k}(D^2\theta)\equiv 0$ or $T(t)\equiv 0$. In the former case an easy argument shows that $\theta\equiv 0$ and, thus, in either case $u(t,x)\equiv 0$. In the following we consider only the case $\lambda<0$ and establish the existence of separable solutions of \eqref{parak-Hessian} by proving the following result.

\begin{theorem}\label{separablesolution}
Let $\Omega$ be a bounded strictly $(k-1)$-convex domain with boundary $\partial\Omega\in C^2$. Then problem \eqref{parak-Hessian} admits a separable solution of the form
\[
u_s(t,x)=(1+t)^{-1/(k-1)}\theta(x),\;\; t\geq 0,\;\;x\in\overline{\Omega},
\]
where $\theta$ is the unique solution in $\Phi_0^k(\Omega)$  
of the nonlinear problem
\begin{equation}\label{k-solution}
\begin{cases}
S_{k}(D^2\theta)=-\frac{1}{k-1}\,\theta\;\, \mbox{in}\;\, \Omega,\\
\theta<0\;\, \mbox{ in}\;\, \Omega,\\
\theta=0\;\, \mbox{ on}\;\,\partial\Omega.
\end{cases}
\end{equation}

Further, $\sup_{\Omega}|\theta(x)|$ admits an estimate depending only on $n,k$ and $\Omega$. If $\tilde{u}_s(t,x)=T(t)\tilde{\theta}(x)$ is an arbitrary separable solution of \eqref{parak-Hessian}, then there exists a unique constant $c>0$ such that $\tilde{\theta}(x)=c\,\theta(x)$ and 
\begin{equation}\label{formula}
\tilde{u}_s(t,x)=u_s(t,x)\left\{\frac{1+t}{[c\,T(0)]^{1-k}+t}\right\}^{1/(k-1)}.
\end{equation}
\end{theorem}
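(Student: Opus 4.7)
The plan is to proceed in three stages: first produce the profile $\theta$ and verify the claimed separable form, then bound $\sup_\Omega|\theta|$ intrinsically, and finally classify all separable solutions by a scaling argument that yields \eqref{formula}.

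First I would obtain $\theta$ by applying Theorem 5.9 of \cite{Jacobsen99} to \eqref{k-solution}. That result provides, for a strictly $(k-1)$-convex $C^2$ domain, a unique $\theta\in\Phi_0^k(\Omega)$ satisfying $S_k(D^2\theta)=-\theta/(k-1)$ with $\theta<0$ in $\Omega$. With this $\theta$ in hand, I would set $\lambda=-1/(k-1)$ and plug $T(0)=1$ into formula \eqref{Te}. The exponent $1/(1-k)$ acting on $1-(k-1)\lambda t=1+t$ yields exactly $T(t)=(1+t)^{-1/(k-1)}$, so $u_s(t,x)=T(t)\theta(x)$ solves \eqref{parak-Hessian} by the separation computation preceding \eqref{Te}, and $u_s$ is $k$-admissible for each $t\ge 0$ because $-\theta\in\Phi_0^k(\Omega)$ and $T(t)>0$ together with the $k$-homogeneity of $S_k$ give $u_s(t,\cdot)\in\Phi_0^k(\Omega)$.

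For the sup estimate I would combine the Hessian Sobolev inequality of Wang (see \cite{Wang94}) with the comparison principle for $k$-admissible solutions. Multiplying $S_k(D^2\theta)=-\theta/(k-1)$ by $\theta$ and integrating, the Hessian Sobolev inequality
\[
\Bigl(\int_\Omega (-\theta)^{k^{*}}\,dx\Bigr)^{(k+1)/k^{*}}\le C(n,k,\Omega)\int_\Omega (-\theta)\, S_k(D^2\theta)\,dx
\]
turns into $\|\theta\|_{L^{k^{*}}}\le C(n,k,\Omega)\|\theta\|_{L^{k+1}}^{\,2/k^{*}}$ (suitable exponent algebra), and iterating or invoking an $L^p$--$L^\infty$ Moser-type step (using the admissibility of $-\theta$ to dominate $\theta$ by a multiple of the torsion-type function on a ball containing $\Omega$ via the comparison principle) yields $\sup_\Omega|\theta|\le C(n,k,\Omega)$. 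Since $\theta$ is unique, this $C$ indeed depends only on $n,k,\Omega$ as claimed. This is the step I expect to be the most technical, although the pointwise uniqueness already makes the qualitative statement automatic; the care goes into keeping the constant dependent only on the stated data.

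Finally, to classify arbitrary separable solutions, I would take $\tilde u_s(t,x)=T(t)\tilde\theta(x)$ of the same form. The separation of variables preceding \eqref{Te} forces $S_k(D^2\tilde\theta)=\tilde\lambda\tilde\theta$ with $\tilde\lambda<0$ and $\tilde\theta\in\Phi_0^k(\Omega)$ (after the sign normalization explained before \eqref{separable}). Setting $\tilde\theta=c\,\theta$ with $c>0$ and using the $k$-homogeneity $S_k(D^2(c\theta))=c^k S_k(D^2\theta)$ reduces the equation for $\tilde\theta$ to $c^{k-1}\lambda=\tilde\lambda$, which forces $c=(\tilde\lambda/\lambda)^{1/(k-1)}>0$; uniqueness of this $c$ then follows from the uniqueness of $\theta$. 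Inserting $\tilde\lambda=-c^{k-1}/(k-1)$ into \eqref{Te} gives
\[
T(t)=\bigl[T^{1-k}(0)+c^{k-1}t\bigr]^{1/(1-k)}
=c^{-1}\bigl[(c\,T(0))^{1-k}+t\bigr]^{1/(1-k)},
\]
so $\tilde u_s(t,x)=c\,\theta(x)\cdot T(t)=\theta(x)\bigl[(cT(0))^{1-k}+t\bigr]^{1/(1-k)}$. Dividing by $u_s(t,x)=\theta(x)(1+t)^{1/(1-k)}$ and using $1/(1-k)=-1/(k-1)$ to invert the ratio gives exactly \eqref{formula}.
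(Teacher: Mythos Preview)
Your overall strategy coincides with the paper's: Jacobsen's Theorem 5.9 for existence and uniqueness of $\theta$, a Hessian Sobolev plus comparison-principle argument for the $\sup$ bound, and a homogeneity/scaling reduction for the classification formula \eqref{formula}. The first and third parts are essentially correct, though note two small slips: it is $\theta$ (not $-\theta$) that lies in $\Phi_0^k(\Omega)$, and in the classification you should \emph{define} $c=(\tilde\lambda/\lambda)^{1/(k-1)}$, verify that $c^{-1}\tilde\theta$ solves \eqref{k-solution}, and then invoke uniqueness to conclude $\tilde\theta=c\theta$, rather than assuming the form $\tilde\theta=c\theta$ at the outset.

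The genuine gap is in the $\sup_\Omega|\theta|$ estimate. The Hessian Sobolev inequality you wrote, with the critical exponent $k^{*}$, only embeds $\Phi_0^k(\Omega)$ into $L^{k^{*}}$ when $1\le k<n/2$; it does \emph{not} give $L^\infty$ control in that range, so your displayed inequality and the subsequent ``exponent algebra'' do not close. The paper splits into three cases: for $n/2<k\le n$ the Sobolev embedding goes directly into $L^\infty$ (Theorem~5.1(iii) of \cite{Wang09}), and then $\|\theta\|_{\Phi_0^k}^{k+1}=\frac{1}{k-1}\|\theta\|_{L^2}^2\le\frac{\mathrm{vol}(\Omega)}{k-1}\|\theta\|_{L^\infty}^2$ closes the loop. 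For $2\le k\le n/2$ the paper instead compares $\theta$ with a constant multiple of the torsion function $e$ solving $S_k(D^2e)=1$, $e=0$ on $\partial\Omega$ (on $\Omega$ itself, not on an enclosing ball), obtaining $\|\theta\|_{L^\infty}\le (\|\theta\|_{L^\infty}/(k-1))^{1/k}\|e\|_{L^\infty}$, and then bounds $\|e\|_{L^\infty}$ via Theorem~2.1 of \cite{ChWa01}. You gesture at both ingredients but do not organize them into a working argument; in particular a Moser iteration is neither needed nor easily available here, and the case distinction is essential.
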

 
Our main result is the following. 
 
\begin{theorem}\label{asymptotic}
Let $u(t,x)$ be a solution of the problem
\begin{equation}\label{parabolick-Hessian}
\begin{cases}
u_t=S_{k}(D^2 u)\; \mbox{in}\; (0,\infty)\times\Omega,\\
u(t,x)\, \mbox{ is }\; k\mbox{-admissible for each}\; t\geq 0,\\
u(t,x)=0\; \mbox{ on}\; [0,\infty)\times\partial\Omega.
\end{cases}
\end{equation}
Then, for all $t\geq 0$,
\begin{equation}\label{bounded}
\sup_{\Omega}\left|(1+t)^{1/(k-1)}u(t,x)-\theta(x)\right|\leq C(1+t)^{-1},
\end{equation}
where $C$ is a positive constant depending on the dimension $n,\,\Omega$, and $u(0,x)$.
Further, estimate \eqref{bounded} is the best possible.
\end{theorem}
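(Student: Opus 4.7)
The plan is to trap $u(t,x)$ between two time-shifted separable solutions built out of $\theta$, and then read off the rate $(1+t)^{-1}$ from an elementary one-variable estimate. The key observation is that for every $\tau>0$ the function $V_\tau(t,x):=(\tau+t)^{-1/(k-1)}\theta(x)$ is a classical $k$-admissible solution of $v_t=S_k(D^2 v)$ vanishing on $\partial\Omega$: indeed, by the homogeneity computation carried out in the proof of Theorem \ref{separablesolution},
\[
\partial_t V_\tau=-\tfrac{1}{k-1}(\tau+t)^{-k/(k-1)}\theta=(\tau+t)^{-k/(k-1)}S_k(D^2\theta)=S_k(D^2 V_\tau).
\]
So $\{V_\tau\}_{\tau>0}$ is a one-parameter family of comparison functions adapted to the scaling of the problem.

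First I would construct initial barriers. Both $\theta$ and $u_0:=u(0,\cdot)$ belong to $\Phi_0^k(\Omega)$, hence are continuous, strictly negative on $\Omega$, vanish on $\partial\Omega$, and are subharmonic (every $k$-admissible function satisfies $\Delta u=\sigma_1(\Lambda)\ge 0$). Applying the Hopf boundary-point lemma to $-\theta$ and $-u_0$ shows that $\partial_\nu\theta$ and $\partial_\nu u_0$, with $\nu$ the outer unit normal, are bounded away from $0$ on $\partial\Omega$ by quantities depending only on $n$, $\Omega$ and $u_0$. Hence $u_0/\theta$ extends continuously to $\overline\Omega$ with positive boundary values, and there exist $0<c_-<c_+$, depending only on $n,\Omega$ and $u_0$, with
\[
c_+\theta(x)\le u_0(x)\le c_-\theta(x)\qquad\text{for every }x\in\overline\Omega.
\]

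Setting $\tau_\pm:=c_\pm^{-(k-1)}$, so that $\tau_+<\tau_-$, these inequalities say exactly $V_{\tau_+}(0,\cdot)\le u(0,\cdot)\le V_{\tau_-}(0,\cdot)$. Since $V_{\tau_\pm}$ and $u$ share the zero boundary data on $[0,\infty)\times\partial\Omega$, the parabolic comparison principle for $k$-admissible classical solutions of $u_t=S_k(D^2u)$ propagates the sandwich to every $t\ge 0$:
\[
V_{\tau_+}(t,x)\le u(t,x)\le V_{\tau_-}(t,x),\qquad (t,x)\in[0,\infty)\times\overline\Omega.
\]
Multiplying by $(1+t)^{1/(k-1)}>0$ and subtracting $\theta(x)$, together with $\theta\le 0$, gives
\[
\bigl|(1+t)^{1/(k-1)}u(t,x)-\theta(x)\bigr|\le \max_{\pm}\left|\Bigl(\tfrac{1+t}{\tau_\pm+t}\Bigr)^{1/(k-1)}-1\right|\,\|\theta\|_{L^\infty(\Omega)}.
\]
Combining the elementary bound $|s^{1/(k-1)}-1|\le C_k|s-1|$ for $s$ in a bounded neighbourhood of $1$ with $\bigl|\tfrac{1+t}{\tau_\pm+t}-1\bigr|=\tfrac{|1-\tau_\pm|}{\tau_\pm+t}\le \tfrac{C}{1+t}$ yields \eqref{bounded}.

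For optimality, I take initial datum $u_0=c\theta$ with $c\neq 1$; by uniqueness the corresponding solution is precisely $V_{c^{1-k}}$, for which the left-hand side of \eqref{bounded} equals $\bigl|\bigl(\tfrac{1+t}{c^{1-k}+t}\bigr)^{1/(k-1)}-1\bigr|\,|\theta(x)|$, and a first-order expansion shows this is of exact order $(1+t)^{-1}$ at every point where $\theta(x)\neq 0$. The technical heart of the argument is the parabolic comparison step: one must verify that the ordering is preserved by the fully nonlinear operator $S_k$, and this in turn requires staying inside the cone where $S_k$ is degenerate elliptic. Since both $u$ and $V_{\tau_\pm}$ are classical solutions in $\Phi_0^k(\Omega)$ with common boundary values, the standard linearisation of $S_k$ along a segment of $k$-admissible matrices is elliptic, and the usual maximum-principle proof carries over. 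The remaining ingredients -- Hopf on subharmonic functions and the calculus estimate for $s\mapsto s^{1/(k-1)}$ near $s=1$ -- are routine.
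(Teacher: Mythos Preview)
Your proof is correct and follows essentially the same route as the paper: trap $u$ between two time-shifted separable barriers $V_{\tau_\pm}$, apply a parabolic comparison principle obtained by linearising $S_k$ along a segment of $k$-admissible functions, and then reduce \eqref{bounded} to an elementary estimate on $\bigl(\tfrac{1+t}{\tau+t}\bigr)^{1/(k-1)}-1$; sharpness via $u_0=c\theta$ is likewise the paper's argument. The only cosmetic differences are that you invoke the Hopf lemma to justify the existence of the initial sandwich constants $c_\pm$ (the paper simply asserts such constants can be chosen) and that you compress the paper's case-by-case analysis of $F(s,t)$ into a single Lipschitz bound for $s\mapsto s^{1/(k-1)}$ near $s=1$.
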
 

Theorem \ref{asymptotic} has the following interesting geometric consequence. 
\begin{theorem}\label{radialasymptotic}
Suppose that the domain $\Omega$ is a ball in $\RR^n$ and $u$ is a solution of \eqref{parabolick-Hessian}. Then:
\[
\left|(1+t)^{1/(k-1)}u(t,x)-\theta(|x|)\right|\longrightarrow 0\,\mbox{ as}\,\, t\longrightarrow\infty\, \mbox{ uniformly on}\;\, \overline{\Omega}.
\]
In others words, $u(t,x)$ asymptotically becomes radially symmetric regardless of its initial shape.
\end{theorem}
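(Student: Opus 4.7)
The plan is to combine Theorem \ref{asymptotic} with the rotational symmetry of the ball together with the uniqueness statement from Theorem \ref{separablesolution}. Theorem \ref{asymptotic} already supplies the quantitative decay
\[
\sup_{\Omega}\left|(1+t)^{1/(k-1)}u(t,x)-\theta(x)\right|\leq C(1+t)^{-1},
\]
so uniform convergence to $\theta(x)$ on $\overline{\Omega}$ is free. The only content left to extract is that when $\Omega$ is a ball, the stationary profile $\theta$ must itself be radial, so that $\theta(x)$ can be replaced by $\theta(|x|)$.

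To verify this, I would fix an arbitrary orthogonal matrix $R\in O(n)$ and consider the function $\theta_R(x):=\theta(Rx)$. Since $\Omega$ is a ball centered at the origin, $R\Omega=\Omega$ and $\theta_R$ is well defined on $\overline{\Omega}$ with $\theta_R=0$ on $\partial\Omega$. A direct computation gives $D^2\theta_R(x)=R^{\top}D^2\theta(Rx)R$; as orthogonal conjugation preserves eigenvalues, the $k$-Hessian is rotationally invariant and
\[
S_k(D^2\theta_R(x))=S_k(D^2\theta(Rx))=-\tfrac{1}{k-1}\theta(Rx)=-\tfrac{1}{k-1}\theta_R(x),
\]
while the same invariance shows $\theta_R\in\Phi_0^k(\Omega)$. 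Hence $\theta_R$ solves exactly problem \eqref{k-solution}. By the uniqueness part of Theorem \ref{separablesolution}, $\theta_R\equiv\theta$ on $\overline{\Omega}$, and since $R\in O(n)$ was arbitrary, $\theta$ depends only on $|x|$.

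Substituting $\theta(x)=\theta(|x|)$ into the bound above yields
\[
\sup_{\overline{\Omega}}\left|(1+t)^{1/(k-1)}u(t,x)-\theta(|x|)\right|\leq C(1+t)^{-1}\longrightarrow 0\quad\mbox{as}\;t\to\infty,
\]
which is exactly the claim. I do not foresee any real obstacle: the rescaled solution inherits radial symmetry asymptotically purely because the unique equilibrium profile it approaches is radial, a structural consequence of the rotation invariance of $S_k$ and of $\Omega$. The only tiny check is the eigenvalue computation $D^2\theta_R=R^{\top}(D^2\theta)R$, which is standard.
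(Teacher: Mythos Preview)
Your proof is correct and follows essentially the same approach as the paper: show that the unique stationary profile $\theta$ is radial by composing with an arbitrary rotation, invoking the rotational invariance of $S_k$ and the uniqueness part of Theorem~\ref{separablesolution}, and then appeal to the decay estimate of Theorem~\ref{asymptotic}. Your write-up is in fact slightly more explicit than the paper's (you spell out $D^2\theta_R=R^{\top}D^2\theta(R\cdot)R$ and the eigenvalue invariance), but the argument is the same.
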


\section{Proofs of the main results}\label{proofs}
\begin{proof}[Proof of Theorem \ref{separablesolution}]
The existence and the uniqueness of solutions to Problem \eqref{k-solution} follows by taking $p=1$ and $\delta=-\lambda=1/(k-1)$ in Theorem 5.9 of \cite{Jacobsen99}. The fact that $u_s(t,x)=(1+t)^{-1/(k-1)}\theta(x),\; t\geq 0,\;x\in\overline{\Omega}$, satisfies \eqref{parak-Hessian} is checked by direct substitution.

Next we prove that $\|\theta\|_{L^\infty(\Omega)}=\sup_\Omega|\theta(x)|$ can be estimated by a constant depending only on the dimension $n,\, k$ and the domain $\Omega$. To this end, for $u\in\Phi_0^k(\Omega)$, let $H_k(u)=-\int_\Omega uS_{k}(D^2u)\, dx$. It is well-known that $\|u\|_{\Phi_0^k(\Omega)}=[H_k(u)]^{1/(k+1)}$ defines a norm on the set $\Phi_0^k(\Omega)$ (see Theorem 5.1 in \cite{Wang94}). Furthermore, Hessian Sobolev inequalities involving this norm have been established (see Theorem 5.2 in \cite{Wang94} and Theorem 5.1 in \cite{Wang09}). We will use these inequalities to estimate the $L^\infty$ norm of $\theta$.\\

\noindent We consider three cases according to the different values of $k,\, 2\leq k\leq n$.
\begin{itemize}
\item[a)] Case $n/2<k\leq n$. By Theorem 5.1-$(iii)$ of \cite{Wang09}, we have
\begin{equation}\label{below}
\|\theta\|_{L^\infty(\Omega)}\leq C\|\theta\|_{\Phi_0^k(\Omega)},
\end{equation}
where $C$ depends only on $n,k$,\, and diam$(\Omega)$.

Since $\theta$ satisfies \eqref{k-solution}, we have 
\begin{equation}\label{above}
\|\theta\|_{\Phi_0^k(\Omega)}^{k+1}=\left\lvert\int_\Omega \theta S_k(D^2\theta)\,dx\right\lvert=\frac{1}{k-1}\int_\Omega\theta^2\,dx\leq \frac{\mbox{vol}(\Omega)
}{k-1}\|\theta\|_{L^\infty(\Omega)}^2.
\end{equation}

Combining \eqref{above} and \eqref{below}, we obtain
\[
\|\theta\|_{L^\infty(\Omega)}\leq C\left(\frac{\mbox{vol}(\Omega)
}{k-1}\right)^{1/(k+1)}\|\theta\|_{L^\infty(\Omega)}^{2/(k+1)},
\]
which is equivalent to 
\[
\|\theta\|_{L^\infty(\Omega)}\leq C\left(\frac{\mbox{vol}(\Omega)
}{k-1}\right)^{1/(k-1)},
\]
where $C$ depends only on $n,k$,\, and diam$(\Omega)$.

On the other hand, let $e(x)$ be the unique negative solution of 
\begin{equation}\label{k-torsion}
S_k(D^2e)=1\; \mbox{in }\; \Omega,\; e=0\; \mbox{on }\; \partial\Omega
\end{equation}
and define $w=c\,e$, where $c=\left(\|\theta\|_{L^\infty(\Omega)}/(k-1)\right)^{1/k}$. Then 
\[
S_k(D^2w)=c^kS_k(D^2e)=\|\theta\|_{L^\infty(\Omega)}/(k-1)\geq |\theta|/(k-1)=S_k(D^2\theta)\;\; \mbox{in}\;\;\Omega. 
\]
By the comparison principle for elliptic equations, we conclude that $\theta\geq w=c\,e$ on $\overline{\Omega}$. Hence 
\begin{equation}\label{bound-e}
\|\theta\|_{L^\infty(\Omega)}\leq c\|e\|_{L^\infty(\Omega)}.
\end{equation}
\item[b)] Case $2\leq k<n/2$. To obtain a bound on the $L^\infty$ norm of $\theta$, we apply a result on uniform estimates for $k$-admissible solutions of $k$-Hessian equations as follows. Set $\psi\equiv 1$ and $p=n/k$ in Theorem 2.1 of \cite{ChWa01}. Then the solution $e$ of \eqref{k-torsion} satisfies $\|e\|_{L^\infty(\Omega)}\leq C(\mbox{vol}(\Omega))^{1/n}$, where $C$ is a constant depending only on $n,k$, and the volume $\mbox{vol}(\Omega)$. Combining the last two inequalities that involve the functions $\theta$ and $e$, we obtain
\[
\|\theta\|_{L^\infty(\Omega)}\leq C\left[\frac{(\mbox{vol}(\Omega))^{1/n}}{(k-1)^{1/k}}\right]^\frac{k}{k-1},
\]
where $C$ is a constant depending only on $n,k$, and the volume $\mbox{vol}(\Omega)$.

\item[c)] Case $k=n/2$, $n$ even ($n\neq 2$). The same estimate on the solution of \eqref{k-torsion} holds, but in this case the constant $C$ depends on diam$(\Omega)$ (see the comments that follow the proof of Theorem 2.1 in \cite{ChWa01}). Thus, choosing $\psi$ and $p$ as above, we obtain the estimate
\[
\|\theta\|_{L^\infty(\Omega)}\leq C \left[\frac{4\mbox{vol}(\Omega)}{(n-2)^2}\right]^{1/(n-2)},
\] 
where $C$ is a constant depending only on $n$ and the diameter diam$(\Omega)$. 
\end{itemize}

\noindent The following proof of formula \eqref{formula} is the same as that in \cite{Oliker91} for the case $k=n$. In the most general case under consideration, only the $k$-homogeneity of $S_k(D^2u)$ is required. Let $\tilde{u}_s(t,x)=T(t)\tilde{\theta}(x)$ be a separable solution of \eqref{parak-Hessian} with $T$ in $C^\infty[0,\infty)$ and $\tilde{\theta}$ $k$-admissible and in $\Phi_0^k(\Omega)$. Then $T$ is given by \eqref{Te} and $\tilde{\theta}$ satisfies \eqref{k-solution} for some $\tilde{\lambda}<0$. Let $\tilde{c}$ be a positive constant (to be chosen later) and $\tilde{\theta}_1=\tilde{c}\,\tilde{\theta}$. Then $S_k(D^2\tilde{\theta}_1)=\tilde{c}^k S_k(D^2\tilde{\theta})=\tilde{c}^k\,\tilde{\lambda}\,\tilde{\theta}=\tilde{c}^{k-1}\,\tilde{\lambda}\,\tilde{\theta}_1$. Now choose $\tilde{c}$ so that $\tilde{c}^{k-1}\,\tilde{\lambda}=-\frac{1}{k-1}$. Then $\tilde{\theta}_1$ satisfies \eqref{k-solution} and, because of the uniqueness of the solution of \eqref{k-solution}, $\tilde{\theta}_1$ agrees with $\theta$. Put $c=\tilde{c}^{-1}=[-\tilde{\lambda}(k-1)]^{1/(k-1)}$. Then, using \eqref{Te},
\begin{eqnarray*}
T(t)\tilde{\theta}(x)&=&\tilde{c}^{-1}T(t)\theta(x)\\
&=&c\,\left[T^{1-k}(0)-(k-1)\tilde{\lambda}t\right]^{1/(1-k)}\theta(x)\\
&=&\left\{[c\,T(0)]^{1-k}+t\right\}^{1/(1-k)}\theta(x)\\
&=&u_s(t,x)\left\{\frac{1+t}{[c\,T(0)]^{1-k}+t}\right\}^{1/(k-1)}.
\end{eqnarray*}
Formula \eqref{formula} now follows as well.
\end{proof}

\begin{remark}
We claim that, when $\Omega$ is a ball, the semi-explicit bounds on the $\sup_\Omega|\theta(x)|$ of $\theta$ obtained in Theorem \ref{separablesolution} can be given in closed-form. To see this, we need to write the $k$-Hessian operator in radial form, in which case the equation in \eqref{teta} takes the one-variable form
\begin{equation}\label{radialHessian}
c_{n,k}\,r^{1-n}\left(r^{n-k}(\theta')^k\right)'=-\frac{1}{k-1}\,\theta,\;\; r>0.
\end{equation}
Here $r=|x|,\, '=d/dr$ and $c_{n,k}$ is defined by $c_{n,k}=\binom{n}{k}/n$. To prove the claim, let $\Omega=B$ be a ball of radius $R>0$ and let $N=\sup_B |D\theta|$. Then we have $|\theta|\leq Nd$ where $d=\mbox{dist}\,(x,\partial B)$.
Since $\theta$ satisfies \eqref{k-solution}, we have
\[
\int_B S_k(D^2\theta)\,dx=-\frac{1}{k-1}\int_B \theta\,dx=\frac{1}{k-1}\int_B \lvert\theta\rvert\,dx\leq\frac{\mbox{vol}(B)}{k-1}\sup_B|\theta(x)|.
\]
Now from Theorem \ref{radialasymptotic}, $\theta$ is radially symmetric and by \eqref{radialHessian} convex in the radial direction $r$. Hence their gradient attains its maximum on the boundary $r=R$ of $B$. Consequently,
\[
\int_B S_k(D^2\theta)\,dx=\omega_n\,c_{n,k}\,R^{n-k}(D\theta (R))^k\geq\omega_n\,c_{n,k}\,\frac{R^{n-k}(\sup_B|\theta(x)|)^k}{(2R)^k},
\]
where $\omega_n$ is the volume of unit ball in $\RR^n$. Therefore, from the last two inequalities, we conclude that
\begin{equation}\label{explicitball}
\sup_B|\theta(x)|\leq\left[\frac{(2R)^k\mbox{vol}(B)}{\omega_n R^{n-k}(k-1)c_{n,k}}\right]^{1/(k-1)}. 
\end{equation}
Note that, for $k=n$, this bound agrees with that given in \cite{Oliker91} when the domain $\Omega$ there is also a ball. Further, the estimate \eqref{explicitball} can be improved using \eqref{bound-e} and the fact that \eqref{k-torsion} has the unique solution
\[
e(x)=c\left(|x|^2-R^2\right),
\]
where $c=\frac{1}{2}(nc_{n,k})^{-1/k}$.
\end{remark}

\begin{proof}[Proof of Theorem \ref{asymptotic}]
We point out that the proof given here is basically the same proof given in \cite{Oliker91} in the case $k=n$, {\it mutatis mutandis}. More interestingly in our context, we can prove the same theorem for $k$-admissible solutions and strictly $(k-1)$-convex domains in the full range $2\leq k\leq n$. Here some technical and classical results from the theory are needed, but the underlying ideas are essentially the same. Let $u(t,x)$ be a solution of \eqref{parabolick-Hessian}. The asymptotic behavior of $u(t,x)$ will be determined by constructing sub- and supersolutions with the use of separable solutions (which act as barriers for solutions of \eqref{parak-Hessian}) as in Theorem \ref{separablesolution}. 

Put $\underline{T}(t)=\left[\underline{T}^{1-k}(0)+t\right]^{-1/(k-1)}$, where we choose $\underline{T}(0)$ so that $\underline{T}(0)\theta(x)\leq u(0,x)$. Further, we put
$\underline{u}(t,x)=\underline{T}(t)\theta(x)$, where $\theta$ satisfies \eqref{k-solution}. Then $\underline{u}(t,x)$ is a separable solution of the equation
\[
\underline{u}_t=S_{k}(D^2\underline{u})\; \mbox{ in}\; (0,\infty)\times\Omega\; \mbox{ and}\; \underline{u}(0,x)=\underline{T}(0)\theta(x).
\]
Put $\tilde{u}(t,x)=\underline{u}(t,x)-u(t,x)$. Then 
\begin{equation}\label{equality1}
\tilde{u}_t=S_{k}(D^2\underline{u})-S_{k}(D^2u)\; \mbox{ in}\; (0,\infty)\times\Omega.
\end{equation}
For each $\tau\in [0,1]$, the function $u_\tau (t,x)=\tau\,\underline{u}(t,x)+(1-\tau)u(t,x)$ is a $k$-admissible function since linear combinations of $k$-admissible functions with non-negative coefficients are also $k$-admissible. See \cite[Lemma 2.3]{TrWa97}. Further, for each fixed $t$ and $x$ in $\Omega$, we have
\[
L(\tilde{u})\equiv S_{k}(D^2\underline{u})-S_{k}(D^2u)=\sum_{i,j}a_{ij}\tilde{u}_{ij},
\]
where $a_{ij}=\int_0^1 S_k^{ij}(D^2 u_{\tau ij})\,d\tau$. Since $u_\tau$ is $k$-admissible, $\left\{S_k^{ij}(D^2 u_{\tau ij})\right\}$ is a positive definite matrix (see {\it e.g.} \cite{Wang09}) on any subset of $(0,\Lambda]\times\Omega$ for any $\Lambda<\infty$, that is, $L$ is elliptic.
Now we rewrite \eqref{equality1} as
\begin{equation}\label{equality2}
\tilde{u}_t=L(\tilde{u})\; \mbox{ in}\; (0,\infty)\times\Omega.
\end{equation}
Note also that
\begin{equation}\label{initialandboundary}
\tilde{u}(0,x)\leq 0\; \mbox{ in}\; \overline{\Omega}\; \mbox{ and}\; \tilde{u}(t,x)=0\; \mbox{ in}\; [0,\infty)\times\partial\Omega. 
\end{equation}
We consider now the differential equality \eqref{equality1} with the initial and boundary conditions \eqref{initialandboundary} for $t\leq\Lambda$ with any $\Lambda<\infty$. It follows from the classical maximum principle that $\tilde{u}(t,x)=\underline{u}(t,x)-u(t,x)\leq 0$ in $[0,\infty)\times\overline{\Omega}$. Consequently, 
\begin{equation}\label{lowerbound}
\left\{(1+t)^{1/(k-1)}\left[\underline{T}^{1-k}(0)+t\right]^{-1/(k-1)}-1\right\}\theta(x)\leq (1+t)^{1/(k-1)}u(t,x)-\theta(x).
\end{equation}
A separable supersolution $\overline{u}(t,x)$ is obtained as follows. Let
\[
\overline{T}(t)=\left[\overline{T}^{1-k}(0)+t\right]^{-1/(k-1)},
\]
where $\overline{T}(0)$ is such that $\overline{T}(0)\theta(x)\geq u(0,x)$, with $\theta$ satisfying \eqref{k-solution}. Put $\overline{u}(t,x)=\overline{T}(t)\theta(x)$. The function $\overline{u}$ satisfies the equation
\[
\overline{u}_t=S_{k}(D^2\overline{u})\; \mbox{ in}\; (0,\infty)\times\Omega\; \mbox{ and}\;\; \overline{u}(t,x)=0\,\mbox{ on}\; [0,\infty)\times\partial\Omega.
\]
Then $u_t-\overline{u}_t=S_{k}(D^2u)-S_{k}(D^2\overline{u})=L(u-\overline{u})$. Applying the maximum principle and taking into account the inequality $\overline{T}(0)\theta(x)\geq u(0,x)$, we conclude that $u(t,x)\leq \overline{u}(t,x)$ in $[0,\infty)\times\overline{\Omega}$.
From this we obtain 
\begin{equation}\label{upperbound}
(1+t)^{1/(k-1)}u(t,x)-\theta(x)\leq\left\{(1+t)^{1/(k-1)}\left[\overline{T}^{1-k}(0)+t\right]^{-1/(k-1)}-1\right\}\theta(x).
\end{equation}
As in \cite{Oliker91}, it is convenient to consider the function $F(s,t):(0,S]\times [0,\infty)\rightarrow (0,\infty),\; S<\infty$, given by
\[
F(s,t)=\left[\frac{1+t}{s+t}\right]^{1/(k-1)}\equiv\left[1-\frac{s-1}{1+t}\,\frac{1+t}{s+t}\right]^{1/(k-1)}.
\]
The function $(1+t)/(s+t)$ is nonincreasing in $t$ when $s\leq 1$ and nondecreasing when $s\geq 1$. Therefore for all $t\geq 0$ when $s\leq 1$
\[
F(s,t)\leq \left[1+\frac{1-s}{s(1+t)}\right]^{1/(k-1)}\leq 1+\frac{1}{k-1}\,\frac{1-s}{s(1+t)}
\]
and, when $s\geq 1$,
\[
F(s,t)\geq 1-\frac{s-1}{1+t}\,\frac{1+t}{s+t}\geq 1-\frac{s-1}{1+t}.
\] 

On the other hand, from the construction of $\underline{T}(0)$ it is clear that we can assume that $\underline{T}(0)\geq 1$. Therefore, $F\left(\underline{T}^{1-k}(0),t\right)\leq 1+(1+t)^{-1}C_1$, where $C_1=\left[1-\underline{T}^{1-k}(0)\right]\frac{\underline{T}^{1-k}(0)}{k-1}$.
Similarly, in deriving \eqref{upperbound} we may assume that $\overline{T}(0)\leq 1$. Then
\[
F\left(\overline{T}^{1-k}(0),t\right)\geq 1-(1+t)^{-1}C_2,
\]
where $C_2=\overline{T}^{1-k}(0)-1$.
Combining \eqref{lowerbound} and \eqref{upperbound}, we obtain for all $t\geq 0$ and $x\in\overline{\Omega}$ that
\begin{equation}\label{lubound}
C_1(1+t)^{-1}\theta(x)\leq (1+t)^{1/(k-1)}u(t,x)-\theta(x)\leq -C_2(1+t)^{-1}\theta(x).
\end{equation}
Let $C=\max\{C_1,C_2\}\sup_{\Omega}\left|\theta\right|$. Then \eqref{lubound} implies \eqref{bounded}.
\end{proof}
\begin{remark}
Similarly to \cite{ArPe81}, the sharpness of the estimate \eqref{lubound} is easily seen by considering the function $u(t,x)=(s+t)^{-1/(k-1)}\theta(x)$ for any $s\in (0,\infty)$.
\end{remark}

\begin{proof}[Proof of Theorem \ref{radialasymptotic}]
Consider the function $\Theta(x)=\theta(Ux)$, where $U$ is an arbitrary rotation. Then, recalling that the $k$-Hessian operator is invariant under rotations of coordinates, we have $S_k(D^2\Theta(x))=-\frac{1}{k-1}\Theta(x)$ and $\Theta(x)=0$ on $\partial\Omega$. It is also easy to see that $\Theta$ is $k$-admissible. Therefore, $\Theta(x)=\theta(Ux)$ is a solution of \eqref{k-solution}. But the solution of \eqref{k-solution} is unique. Hence, $\theta(x)=\theta(|x|)$. Combining this statement with the assertion \eqref{bounded} in Theorem \ref{asymptotic}, the proof is complete.
\end{proof}

\begin{remark}
Let $u$ be a solution of equation in \eqref{parak-Hessian}. Define $M(t,\Omega)=\int_\Omega \vert u(t,x)\vert dx$. We call this quantity the {\it total mass} of $u$ at time $t$. Note that the mass of a solution of Problem \eqref{parak-Hessian} decreases in time. Indeed, integration of the equation with respect to $x$ gives $\frac{d}{dt}M(t,\Omega)=-\int_\Omega u_t(t,x)dx=-\int_\Omega S_k(D^2u)dx<0$. We will see in the next section that this is not the case for the family of self-similar solutions given in \eqref{k-Baren} when $\Omega=\RR^n$.   
\end{remark}

\section{$k$-Barenblatt solutions}\label{k-Barenblatt}
In this section we will derive the compactly supported family of mass conserving $k$-Barenblatt solutions given in \eqref{k-Baren} for the equation 
\begin{equation}\label{wrt}
u_t=(-1)^{k-1} S_k(D^2 u).
\end{equation} 
Thus, we are looking for a positive solution of the above evolution equation with constant mass in $\RR^n$, that is
\begin{equation}\label{masscondition}
\int_{\RR^n} u(t,x) dx = M>0,\,\, \mbox{for all}\,\, t>0.
\end{equation}
Due to the homogeneity of \eqref{wrt}, we will actually look for a self-similar solution $u$ to \eqref{wrt} of the form:
\begin{equation}\label{self:0}
u(t,x) = t^{-\alpha}\theta(\xi),\; \xi = \frac{x}{t^{\beta}},\, t>0,\; x\in \RR^n, 
\end{equation}
for some profile $\theta$ and the exponents $\alpha$ and $\beta$ to be determined. Inserting \eqref{self:0} into the left-hand side of \eqref{wrt}, we have
\begin{align*}
u_t& = -\alpha t^{-\alpha-1}\theta(\xi) + t^{-\alpha} \frac{d\theta}{d\xi}\cdot\frac{d\xi}{dt}\\
& =-\alpha t^{-\alpha-1}\theta(\xi)+t^{-\alpha}\nabla_{\xi} \theta (\xi)\cdot (-\beta)t^{-\beta-1}x\\
& =  t^{-\alpha-1}(-\alpha\theta(\xi) -\beta\nabla_{\xi} \theta(\xi)\cdot\xi).
\end{align*}

Inserting \eqref{self:0} into the right-hand side of \eqref{wrt} (omitting the scalar factor) we have
\begin{align*}
S_k(D^2u)& = t^{-k\alpha-2k\beta}S_k(D^2\theta(\xi))\\
& =t^{-k(\alpha+2\beta)}S_k(D^2\theta(\xi)).
\end{align*}
Then, from the condition $\alpha (k-1)+2k\beta=1$ (self-similarity condition), we get the following profile equation
\begin{equation}\label{k-profile}
\alpha\theta(\xi)+\beta\nabla_{\xi} \theta(\xi)\cdot\xi=(-1)^k S_k(D^2\theta(\xi)).
\end{equation}

We also have from \eqref{masscondition}
\[
M=\int_{\RR^n} u(t,x)\,dx = \int_{\RR^n} t^{-\alpha}\theta\left(\frac{x}{t^{\beta}}\right)dx = t^{n\beta-\alpha}\int_{\RR^n} \theta(\xi)\,d\xi
\]
(it is assumed that $\theta\in L^1(\RR^n)$), which yields $n\beta-\alpha= 0$ (mass-preserving condition). Solving the relations between the similarity exponents $\alpha$ and $\beta$ we obtain
$\alpha=\frac{n}{n(k-1)+2k}$ and $\beta=\frac{1}{n(k-1)+2k}$. 

Now let $\theta$ be a radially symmetric function, say $\theta=\theta(r),\, r=\abs{\xi}\geq 0$. Then the governing equation \eqref{k-profile} takes the form
\begin{equation}\label{radial}
\alpha\theta(r)+\beta r\theta'(r)=(-1)^kc_{n,k}\,r^{1-n}(r^{n-k}(\theta'(r))^k)',\; r>0,
\end{equation}
with the symmetry condition $\theta'(0)=0$. From this and the equality $\alpha=n\beta$, the equation \eqref{radial} can be integrated once (fortunately) and then simplified as
\begin{equation}\label{radialprofile}
\beta\theta(r)=(-1)^{k}c_{n,k}\,r^{-k}(\theta'(r))^k,\; r>0;\;\theta'(0)=0.
\end{equation}
We observe that, when $k=1$ in \eqref{radialprofile}, an explicit integration shows that $\theta(r)=Ce^{-\frac{r^2}{4}}$, where $C$ is a positive constant. Thus from \eqref{self:0} we recover the Gaussian function of the classical heat equation. Now let $k>1$. A necessary condition for the existence of a solution with the required properties is that the profile $\theta$ be decreasing. Thus integrating \eqref{radialprofile} we have
\begin{equation}\label{theta}
\theta(r)=\left(C-\frac{k-1}{k}\left(\frac{\beta}{c_{n,k}}\right)^\frac{1}{k}\frac{r^2}{2}\right)_+^{\frac{k}{k-1}},\;\; r\geq 0.
\end{equation}
Finally, putting $\gamma=\frac{k-1}{2k}\left(\frac{\beta}{c_{n,k}}\right)^\frac{1}{k}$ and inserting $\theta(r)$ in \eqref{self:0}, we obtain \eqref{k-Baren}.

Note that the positive constant $C$ in \eqref{theta} may easily be put in correspondence with the mass of the solution, $C=C(M)$, by \eqref{masscondition}. In fact, introducing the constant $r_0=\sqrt{\frac{2C}{\gamma}}$, the self-similar solution with constant mass has the explicit form
\begin{equation}\label{ssm}
u(t,x)=t^{-\frac{n}{n(k-1)+2k}}\left[\frac{k-1}{4k[c_{n,k}(n(k-1)+2k)]^\frac{1}{k}}\left(r_0^2-\frac{\abs{x}^2}{t^\frac{2}{n(k-1)+2k}}\right)_+\right]^\frac{k}{k-1},
\end{equation}
where
\[
r_0(M)=\left\{\pi^{-\frac{n}{2}}\left(\frac{4k}{k-1}\right)^\frac{k}{k-1}[c_{n,k}(n(k-1)+2k)]^\frac{1}{k-1}\frac{\Gamma\left(\frac{n}{2}+\frac{2k-1}{k-1}\right)}{\Gamma\left(\frac{2k-1}{k-1}\right)}\,M\right\}^\frac{k-1}{n(k-1)+2k}
\]
and where $\Gamma(\cdot)$ is the Gamma-function.

We have the following properties of the self-similar solutions given in \eqref{k-Baren}:
\begin{itemize}
\item[$\bullet$] supp\,$U_C(t,\cdot)\subseteq B\left(0,\,t^\beta\left[\frac{2k}{k-1}\left(\frac{c_{n,k}}{\beta}\right)^\frac{1}{k}C\right]^\frac{1}{2}\right)$.

\item[$\bullet$] Finite propagation speed.

\item[$\bullet$] Mass conservation.

\item[$\bullet$] $\lim_{t\rightarrow 0^+}U_C(t,x)=M\delta_0(x)$, where $\delta_0(x)$ is Dirac's delta function concentrated at 0.

\item[$\bullet$] Everywhere, except on the degeneracy surface $[0,\infty)\times\left\{\abs{x}=t^\beta\left[\frac{2k}{k-1}\left(\frac{c_{n,k}}{\beta}\right)^\frac{1}{k}C\right]^\frac{1}{2}\right\}$, it is classical (and infinitely differentiable).
\end{itemize}

\bibliographystyle{plain}
\bibliographystyle{apalike}
\bibliography{kHessianbib}
\end{document}